\documentclass[11pt]{amsart}
\usepackage[utf8]{inputenc}
\usepackage{graphicx,xcolor,verbatim}
\usepackage{amscd,amsmath,amsfonts,amssymb}
\usepackage{mathtools}
\usepackage[foot]{amsaddr}
\usepackage[pdfborder=000,pdftex=true]{hyperref}

\usepackage{cite}

\newtheorem{theorem}{Theorem}

\newtheorem{remark}[theorem]{Remark}
\newtheorem{corollary}[theorem]{Corollary}

\newtheorem{example}{Example}

\numberwithin{equation}{section}

\newcommand{\R}{\mathbb R}
\newcommand{\N}{\mathbb N}

\newcommand{\SD}{\Sigma_{\mathcal D}}

\newcommand{\SN}{\Sigma_{\mathcal N}}
\newcommand{\Hs}{H_{\Sigma_{\mathcal{D}}}^s(\Omega)}

\begin{document}
	
\title[An existence result for...]{An existence result for fractional problems with mixed boundary conditions }

\author{El-Haj Laamri}
\address[E.-H. Laamri]{
Institut \'Elie Cartan de Lorraine (IECL)\\
Campus Aiguillettes\\
54506 Vand\oe uvre-l\`es-Nancy, France\\ and  IAS-UM6P\\ 43150 Ben Guerir, Morocco
}
\email{el-haj.laamri@univ-lorraine.fr}

\author{Giovanni Molica Bisci$^\dagger$}
\address[G. Molica Bisci]{Department of Human Sciences and Promotion of Quality of Life, San Raffaele University, via di Val Cannuta 247, I-00166 Roma, Italy}
\email{\tt giovanni.molicabisci@uniroma5.it}

\keywords{Fractional Laplacian, Variational Methods, Existence results, Mixed Boundary Data.\\
\phantom{aa} 2020 AMS Subject Classification: Primary: 49J35, 35A15, 58E05; Secondary: 35J61, 35S15.\\
\phantom{aa} $^\dagger$Corresponding author: G. Molica Bisci
}

\maketitle

\begin{abstract} 
In this paper we investigate the existence and qualitative properties of weak solutions for a class of nonlinear fractional equations driven by the spectral fractional Laplacian under mixed Dirichlet--Neumann boundary conditions. More precisely, we consider subcritical problems of the form
\begin{equation*}
\left\{
\begin{array}{ll}
        (-\Delta)^s u=\lambda u + \mu f(x,u) & \text{ in } \Omega\smallskip, \\
\displaystyle u\chi_{\SD} +\frac{\partial u}{\partial\nu}\chi_{\SN}=0 & \text{ on } \partial\Omega,
\end{array}
\right.
\end{equation*}
where \(s\in(1/2,1)\), \(\Omega\subset\mathbb R^N\) is a bounded smooth domain, and the boundary \(\partial\Omega\) is endowed with a mixed Dirichlet--Neumann configuration satisfying suitable geometric and analytic assumptions. Working in the natural spectral fractional framework associated with the mixed boundary operator, we establish the existence of nontrivial weak solutions by means of suitable variational methods. The analysis is based on a topological and variational principle combined with a localization argument involving certain compactly supported cutoff functions. More precisely, the proof relies on a delicate local comparison argument involving localized plateau-type test functions and explicit asymptotic estimates near the origin. Such an approach is sufficiently flexible and may potentially be adapted to several different settings. As applications, we establish simplified existence criteria in the autonomous case through the use of the Chebyshev radius of the domain and present concrete examples illustrating the applicability of the abstract theory.
\end{abstract}

\section{Introduction}
Nonlinear diffusion processes exhibiting anomalous or long-range behavior arise
naturally in a wide range of physical and biological contexts, including
quasi-geostrophic flows \cite{ConstantinWu1999}, anomalous heat conduction in non-homogeneous
media \cite{MetzlerKlafter2000}, population dynamics with long-range dispersal \cite{BucurValdinoci2016},
and mathematical finance \cite{ContTankov2004}. At the continuum level, such phenomena are
often modeled by equations involving fractional powers of elliptic operators.

When the underlying domain is bounded and the diffusion is confined, the
spectral fractional Laplacian $(-\Delta)^s$ arises as the natural operator: it
is defined through the spectral decomposition of the classical Laplacian
and governs the generator of a subordinated Brownian motion reflected at
the boundary. This is in contrast with the integral (Riesz) fractional
Laplacian, which does not confine diffusion to $\Omega$. For further background on fractional Laplacians in bounded domains, in particular on the distinction between the spectral and integral fractional Laplacians, we refer the reader to \cite{DaoudLaamri2022} and the references therein.

In many physical models the boundary $\partial\Omega$ is not homogeneous:
part of the boundary may be held at a fixed state (Dirichlet condition,
$u=0$ on $\SD$, modeling e.g.\ a thermostat, a fixed temperature, or an
imposed electric potential), while the complementary part may be insulated
or free (Neumann condition, ${\partial u}/{\partial\nu}=0$ on $\SN$,
modeling zero heat flux or an electrically isolated surface). Such
\emph{mixed Dirichlet--Neumann} configurations arise naturally in several
applied contexts:
\begin{itemize}
\item \emph{Anomalous heat conduction.} A body in partial contact with a thermostat
(Dirichlet part) and partially insulated (Neumann part), where the
underlying diffusion is anomalous due to micro-structural heterogeneity
or memory effects in the material \cite{MetzlerKlafter2000}. The
fractional exponent $s$ quantifies the degree of non-locality of the
heat transport.

\item \emph{Fractional electrostatics.} A conductor whose boundary is
partially grounded (Dirichlet: imposed zero potential) and partially
isolated (Neumann: zero normal current). Fractional models account for
polarization effects and long-range electrostatic interactions in
dielectric or composite media.

\item \emph{Population dynamics and ecology.} Models of species dispersal with
long-range jumps (L\'evy flights) on domains where part of the habitat
boundary acts as an absorbing barrier (hostile exterior, Dirichlet) and
the remainder as a reflecting or permeable fence (Neumann); see
\cite{BucurValdinoci2016}.

\item \emph{Mathematical finance.} Pricing of barrier options where the payoff
is extinguished on part of the state-space boundary (Dirichlet: the
knock-out barrier) while the remaining boundary is a natural reflecting
barrier. The spectral fractional operator arises as the infinitesimal
generator of a confined jump-diffusion process \cite{ContTankov2004}.
\end{itemize}
The interaction between fractional diffusion and the mixed boundary
geometry creates a functional framework that is genuinely richer
and more delicate than either the purely Dirichlet or the purely Neumann case.

In this paper, we investigate the existence of solutions to the nonlinear problem
\begin{equation}\label{problem}\tag{$P_{\lambda,\mu}$}
\left\{
\begin{array}{ll}
        (-\Delta)^s u=\lambda u + \mu f(x,u) & \text{ in } \Omega\smallskip, \\
 B(u)=0 & \text{ on } \partial\Omega,
\end{array}
\right.
\end{equation}
where $\Omega\subset\R^N$ is a bounded domain with a smooth boundary, $N>2s$, $s\in(1/2,1)$, $\lambda,\mu$ are real parameters, and $(-\Delta)^s$ is the spectral fractional Laplacian on $\Omega$ endowed with mixed Dirichlet-Neumann conditions on $\partial\Omega$,
$$
B(u)\vcentcolon=u\chi_{\SD} +\frac{\partial u}{\partial\nu}\chi_{\SN}.
$$
\indent Here $\nu$ is the outward unit normal to $\partial\Omega$, $\chi_A$ denotes the characteristic function of the set $A\subset\partial\Omega$ and moreover the following hypotheses hold,
\begin{itemize}
\item[$(\Omega_1)$] $\SD$ and $\SN$ are smooth $(N-1)$--dimensional submanifolds of $\partial\Omega$;
\item[$(\Omega_2)$] $\SD$ is a closed manifold with positive measure, namely $|\SD|=\alpha\in(0,|\partial\Omega|)$;
\item[$(\Omega_3)$] $\SD\cap\SN=\emptyset$, $\SD\cup\SN=\partial\Omega$ and $\SD\cap\overline{\Sigma}_\mathcal{N}=\Gamma$, where $\Gamma$ is a smooth $(N-2)$--dimensional submanifold of $\partial\Omega$.
\end{itemize}

Let us denote by $\sigma((-\Delta)^s)=\{\Lambda_k\}_{k\in\N^*}$ the spectrum of $(-\Delta)^s$ with the mixed Dirichlet-Neumann conditions $B(u)=0$ on $\partial\Omega$, each eigenvalue being repeated according its multiplicity,
\begin{equation*}
0<\Lambda_1<\Lambda_2\leq\ldots\Lambda_k\leq\Lambda_{k+1}\leq\ldots 
\end{equation*}
For $\lambda<\Lambda_1$, let us also set
\begin{equation}\label{mMlambda}
	m_\lambda:=\min\left\lbrace \sqrt{\frac{\Lambda_1-\lambda}{\Lambda_1}},1\right\rbrace, \quad M_\lambda:=\max\left\lbrace \sqrt{\frac{\Lambda_1-\lambda}{\Lambda_1}},1\right\rbrace.
\end{equation}

The three real  parameters $\lambda$, $\mu$
and $s$ appearing in $(P_{\lambda,\mu})$  play conceptually distinct roles, and it is worth commenting on each of them before stating our main result.\\
The parameter $\lambda$ acts as a \emph{spectral shift}: it tunes the distance of the linear part of the equation from a resonance of the operator $(-\Delta)^s$. In the physical language of fractional vibrations or wave propagation on bounded domains, $\Lambda_k$ can be interpreted as the $k$-th \emph{natural frequency} of the fractional membrane; the condition $\lambda < \Lambda_1$ then places the system in the \emph{sub-resonant regime}, meaning that the external forcing frequency lies strictly below the fundamental mode of the medium. The constants $m_\lambda$ and $M_\lambda$ defined above in \eqref{mMlambda} measure how far the system is from resonance: as $\lambda \nearrow \Lambda_1$, one has $m_\lambda \to 0$, reflecting the well-known phenomenon of energy amplification near resonance. The parameter $\mu$, on the other hand, controls the \emph{intensity of the nonlinear perturbation} $f(x,u)$. The main result of this paper identifies an explicit interval $(\mu_*,\mu_\lambda)$ for $\mu$ within which the problem is guaranteed to have a nontrivial weak solution; both endpoints depend on the geometry of $\Omega$, the spectral data of $(-\Delta)^s$, and the local behavior of $f$ near the origin.
\\
The restriction $s\in(1/2,1)$ is not merely a technical convenience: it is the precise range for which mixed Dirichlet--Neumann boundary conditions are meaningful in the fractional setting. Indeed, by the Lions--Magenes trace theory \cite[Theorem~11.1]{Lions1972}, the space $H_0^s(\Omega)$ coincides with $H^s(\Omega)$ when $s\leq 1/2$, so that any boundary condition imposed on a subset of $\partial\Omega$ becomes vacuous---the Dirichlet part $\SD$ carries no information. Only for $s\in(1/2,1)$ does one have the strict inclusion $H_0^s(\Omega)\subsetneq H^s(\Omega)$, which ensures that the space $H^s_{\SD}(\Omega)$ genuinely interpolates between the full Neumann space $H^s(\Omega)$ and the full Dirichlet space $H_0^s(\Omega)$, and that the mixed boundary operator $B(u)$ is well defined in a nontrivial functional sense. This range also corresponds, in the probabilistic interpretation, to a subordinated Brownian motion for which the boundary of the domain is \emph{visited} with positive probability, making the distinction between the absorbing part $\SD$ (Dirichlet) and the reflecting part $\SN$ (Neumann) physically meaningful; see \cite{StingaTorrea2010,CaffarelliSilvestre2007} for the extension and stochastic characterizations of spectral fractional operators.

\medbreak

 Results of multiplicity of solutions for \eqref{problem}, with $\lambda$ in the nearly resonant regime, have also been recently obtained in \cite{mov2023,colmolortvil2025} by different variational tools.


\medbreak

Finally, let us denote by $\kappa_p$ the best constant for the embedding $H_{\Sigma_{\mathcal{D}}}^s(\Omega) \hookrightarrow L^p(\Omega)$ and set $$
\kappa_p:=\sup_{u\in H_{\Sigma_{\mathcal{D}}}^{s}(\Omega)\setminus\{0\}}\frac{\left\|u\right\|_p }{\|u\|_{H_{\Sigma_{\mathcal{D}}}^s}}
$$
for every $p\in[1,2N/(N-2s)]$; see Section \ref{functionalsettings} for details. For every \(x_0\in\Omega\) and every \(r>0\) such that $B_r(x_0)\subset\Omega,$
we denote by
\[
B_r(x_0)
:=
\left\{
x\in\Omega:\ |x-x_0|<r
\right\}
\]
the open ball centered at \(x_0\) with radius \(r\). Moreover, throughout the paper, \(\omega_N\) stands for the Lebesgue measure of the unit ball in \(\Omega\), namely
\[
\omega_N
:=
|B_1(0)|
=
\frac{\pi^{N/2}}
{\Gamma\!\left(\frac N2+1\right)},
\]
where
\[
\Gamma(t)
:=
\int_0^{+\infty}
y^{t-1}e^{-y}\,dy,
\qquad
(t>0),
\]
denotes the Euler Gamma function. Finally, let $r>0$ be such that
$
B_{2r}(x_0)\Subset\Omega,
$
and define
$$
{
C_{N,s}
:=
\inf_{v\in\Gamma_r}
\frac{
\|v\|_{H^s_{\Sigma_D}(\Omega)}^2
}{
r^{N-2s}
},
}
$$
where
\[
\Gamma_r
:=
\left\{
v\in H_0^1(\Omega):
\begin{array}{l}
0\le v\le1 \quad \text{a.e. in }\Omega,\\[4pt]
v\equiv1 \quad \text{on } B_r(x_0),\\[4pt]
\operatorname{supp}(v)\subseteq \overline{B_{2r}(x_0)}
\end{array}
\right\}.
\]

With the above notations,  the main result of this paper reads as follows.

\begin{theorem}\label{thmsubcritgrowth}
Let \(\Omega\subset\mathbb R^N\) be a bounded domain with smooth boundary $\partial\Omega$. Suppose that
\(N>2s\), \(s\in(1/2,1)\), \(\partial\Omega=\Gamma_D\cup\Gamma_N\) and assume that
\((\Omega_1)-(\Omega_3)\) hold. Moreover, let \(f:\Omega\times\mathbb R\to\mathbb R\) be a Carath\'eodory
function and set
\[
F(x,t):=\int_0^t f(x,\xi)\,d\xi.
\]
Assume that there exist \(q\in(1,2_s^*)\) and \(a_1,a_2>0\) such that
\[
|f(x,t)|\le a_1+a_2|t|^{q-1}
\qquad\text{for all }(x,t)\in\Omega\times\mathbb R,
\]
where \(2_s^*:=2N/(N-2s)\).

Moreover, let $r>0$ be such that
$
B_{2r}(x_0)\Subset\Omega,
$
and define
\[
\mu_\lambda:=
q\sup_{t>0}
\frac{
t
}{
q\kappa_1a_1\dfrac{\sqrt2}{m_\lambda}
+
\kappa_q^qa_2
\left(\dfrac{\sqrt2}{m_\lambda}\right)^q
t^{q-1}
},
\]
provided that \(\lambda<\Lambda_1\).
Assume that
\[
L:=
\limsup_{t\to0^+}
\operatorname*{ess\,inf}_{x\in B_r(x_0)}
\frac{F(x,t)}{t^2}\in(0,+\infty],
\]
as well as
\[
\lambda_0:=
\liminf_{t\to0^+}
\operatorname*{ess\,inf}_{x\in B_{2r}(x_0)}
\frac{F(x,t)}{t^2}\in\mathbb R\cup\{+\infty\}.
\]

In addition, if \(L<+\infty\) and \(\lambda_0<+\infty\), assume that
\begin{equation}\label{thresh}
L>
\frac{1}{\mu_\lambda}\left(\frac{C_{N, s}}{2 \omega_N r^{2 s}}+\frac{|\lambda| 2^N}{2}\right)-\left(2^N-1\right) \min \left\{\lambda_0, 0\right\}.
\end{equation}
Then, Problem \((P_{\lambda,\mu})\) admits at least one nontrivial weak solution for every
\[
\mu\in
\left(
\frac{
C_{N,s}
+
|\lambda|2^N\omega_Nr^{2s}
}{
2\omega_Nr^{2s}
\left[
L+
(2^N-1)\min\{\lambda_0,0\}
\right]
},
\,\mu_\lambda
\right).
\]
\end{theorem}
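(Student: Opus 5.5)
The proof will use Ricceri's local minimum principle in the Bonanno form. Work in $X:=\Hs$. For $\lambda<\Lambda_1$ define
\[
\Phi(u):=\frac12\Big(\|u\|_{\Hs}^2-\lambda\|u\|_2^2\Big),\qquad \Psi(u):=\int_\Omega F(x,u)\,dx ,
\]
and set $J_\mu:=\Phi-\mu\Psi$. Critical points of $J_\mu$ are exactly the weak solutions of \eqref{problem}. By the variational characterization of $\Lambda_1$, $\|u\|_2^2\le \Lambda_1^{-1}\|u\|_{\Hs}^2$, which gives
\[
\frac{m_\lambda^2}{2}\|u\|^2\le\Phi(u)\le \frac{M_\lambda^2}{2}\|u\|^2 .
\]
So $\Phi$ is coercive, sequentially weakly lower semicontinuous and $C^1$, and its derivative admits a continuous inverse. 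The growth bound with $q<2_s^*$ and the compact embedding $X\hookrightarrow L^q$ make $\Psi$ a $C^1$ functional with compact derivative. The abstract result then says: if for some $\rho>0$
\[
\varphi(\rho):=\inf_{\Phi(u)<\rho}\frac{\sup_{\Phi\le\rho}\Psi-\Psi(u)}{\rho-\Phi(u)}<\frac1\mu ,
\]
then $J_\mu$ has a local minimum $u_\mu$ with $\Phi(u_\mu)<\rho$.

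\textbf{Upper bound $\mu<\mu_\lambda$.} If $\Phi(u)\le\rho$, then $\|u\|\le \sqrt{2\rho}/m_\lambda$. The growth condition gives
\[
\Psi(u)\le a_1\kappa_1\|u\|+\frac{a_2}{q}\kappa_q^q\|u\|^q ,
\]
and therefore, testing the infimum with $u=0$,
\[
\varphi(\rho)\le \frac{1}{\rho}\Big(a_1\kappa_1\frac{\sqrt{2\rho}}{m_\lambda}+\frac{a_2\kappa_q^q}{q}\Big(\frac{\sqrt{2\rho}}{m_\lambda}\Big)^q\Big).
\]
Put $t=\sqrt\rho$. The right-hand side becomes $\dfrac{q\kappa_1a_1\sqrt2/m_\lambda+\kappa_q^qa_2(\sqrt2/m_\lambda)^q t^{q-1}}{q\,t}$. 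Hence for every $\mu<\mu_\lambda$ there is a $t$, and so a $\rho$, with $\varphi(\rho)<1/\mu$. This produces a local minimizer $u_\mu$ with $\Phi(u_\mu)<\rho$.

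\textbf{Nontriviality via the lower bound on $\mu$.} We must show $u_\mu\neq0$. It suffices to exhibit $w_\tau\in X$ with $\Phi(w_\tau)$ arbitrarily small and $J_\mu(w_\tau)<0$: since $u_\mu$ is the minimizer of $J_\mu$ on the sublevel $\{\Phi<\rho\}$, this gives $J_\mu(u_\mu)<0=J_\mu(0)$. Take $v\in\Gamma_r$ nearly optimal, so $\|v\|^2\le (C_{N,s}+\varepsilon)r^{N-2s}$; since $v\in H_0^1\subset X$ this is legitimate. Put $w_\tau=\tau v$. Using $0\le v\le1$, $v=1$ on $B_r$ and $\operatorname{supp} v\subset \overline{B_{2r}}$, we get
\[
\Phi(w_\tau)\le \frac{\tau^2}{2}\Big((C_{N,s}+\varepsilon)r^{N-2s}+|\lambda|\,\omega_N2^Nr^N\Big).
\]
For $\Psi$ we split $\Omega$:
\begin{itemize}
\item on $B_r$, choosing $\tau_n\to0$ along the limsup sequence, $F(x,\tau_n)\ge(L-\varepsilon)\tau_n^2$;
\item on the annulus $B_{2r}\setminus B_r$, of measure $(2^N-1)\omega_Nr^N$, the liminf gives $F(x,\tau v)\ge(\min\{\lambda_0,0\}-\varepsilon)\tau^2v^2\ge (\min\{\lambda_0,0\}-\varepsilon)\tau^2$ for small $\tau$;
\item outside $B_{2r}$, $F=0$.
\end{itemize}
Hence
\[
\Psi(w_{\tau_n})\ge \tau_n^2\,\omega_Nr^N\big[L+(2^N-1)\min\{\lambda_0,0\}-c\varepsilon\big].
\]
Dividing by $\tau_n^2\omega_Nr^N/2$, the inequality $J_\mu(w_{\tau_n})<0$ holds exactly when $\mu$ exceeds the stated lower endpoint, with $\varepsilon$ taken small. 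Assumption \eqref{thresh} guarantees the interval for $\mu$ is nonempty. If $L=+\infty$ or $\lambda_0=+\infty$, the same computation uses an arbitrarily large constant $M$ in place of $L-\varepsilon$, and any $\mu>0$ works.

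\textbf{Main obstacle.} The delicate step is the annulus estimate, because the liminf bound is given only on a null-set-free ess-inf of $F(x,t)/t^2$. Here $t=\tau v(x)$ varies between $0$ and $\tau$, so we need uniformity for all $t\in(0,\tau]$, not just along a sequence. This is precisely why a liminf, rather than a limsup, is assumed on $B_{2r}$. The points where $v(x)=0$ are harmless, since there $F=0$. When $\min\{\lambda_0,0\}<0$, the replacement of $v^2$ by $1$ is the step that makes the bound valid.
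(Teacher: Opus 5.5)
Your proposal is correct and takes essentially the same route as the paper's proof. You apply Ricceri's principle to $\Phi(u)=\frac12\bigl(\|u\|^2-\lambda\|u\|_2^2\bigr)$ and $\Psi(u)=\int_\Omega F(x,u)\,dx$, bounding the infimum by testing $u=0$; nontriviality then comes from the scaled plateau functions $\tau_n v$ with $v\in\Gamma_r$, where $\Psi$ is split over $B_r(x_0)$ and the annulus exactly as in the paper. You also make explicit two points the paper leaves implicit: you take a near-optimal $v$ (costing $C_{N,s}+\varepsilon$, since the infimum defining $C_{N,s}$ need not be attained), and you note that $\Phi(\tau_n v)\to 0$, so $\tau_n v$ lies in the sublevel set on which $u_\mu$ is the minimizer.
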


\vskip3mm

Before proceeding further, we briefly comment on the significance of the threshold condition~\eqref{thresh} imposed on $L$. The following remark clarifies its role in Theorem~\ref{thmsubcritgrowth} and shows how it can be read as an explicit competition between two quantities of different nature.

\begin{remark}
The threshold condition~\eqref{thresh} in
Theorem~\ref{thmsubcritgrowth} is not merely technical. It admits a
natural geometric and spectral interpretation. It expresses the precise
balance between the local nonlinear gain and the spectral-fractional
cost of localization. More precisely, the quantity $
\frac{C_{N,s}}{\omega_N r^{2s}} $
measures the \emph{fractional energy cost} of localizing a unit plateau
function on the ball \(B_r(x_0)\). Indeed, it quantifies the spectral
fractional energy required to support a function that equals one on an
inner ball of radius \(r\) and vanishes outside
\(B_{2r}(x_0)\). Consequently, condition~\eqref{thresh} requires that
the local nonlinear gain near the origin, measured by the asymptotic
behavior of \(F(x,t)/t^2\) as \(t\to0^+\), exceeds this localization
cost together with the correction induced by the spectral shift
\(\lambda\).
When \(L=+\infty\), condition~\eqref{thresh} is automatically satisfied.
The genuinely new feature of
Theorem~\ref{thmsubcritgrowth} is therefore the case
\(L<+\infty\), where an explicit and computable lower bound on \(L\) is
obtained. As a consequence, the admissible interval for the parameter
\(\mu\) is completely explicit and reflects simultaneously the geometry
of the domain, through \(r\), \(\omega_N\), and \(C_{N,s}\), and the
local behavior of the nonlinearity near the origin, through \(L\) and
\(\lambda_0\).
\end{remark}

The result given in Theorem~\ref{thmsubcritgrowth} fits into the variational study of nonlinear elliptic and nonlocal problems with mixed boundary conditions, a topic which goes back, in the local setting, to the analysis of semilinear equations with Dirichlet--Neumann boundary configurations, see for instance \cite{ColP}. In the fractional framework, mixed boundary conditions introduce additional difficulties, since the underlying space is no longer the standard fractional Sobolev space with homogeneous Dirichlet condition on the whole boundary, but rather the spectral space \(H^s_{\Sigma_D}(\Omega)\), whose structure depends on the distribution of the Dirichlet and Neumann portions of \(\partial\Omega\). This feature is closely related to the spectral properties of the mixed operator and to the geometry of the interface between \(\Sigma_D\) and \(\Sigma_N\), as already emphasized in \cite{colort2019,mov2023}.

Theorem~\ref{thmsubcritgrowth} provides an existence result for subcritical nonlinearities under a perturbative regime governed by the parameter \(\mu\). The proof is based on the celebrated Ricceri's variational principle \cite{ric2000a}, which gives a local minimum of the energy functional in a suitable sublevel set. The main point is then to prove that this local minimum is not the trivial solution. This is the delicate part of the argument. Indeed, when \(f(\cdot,0)\equiv0\), the zero function is always a solution of \((P_{\lambda,\mu})\), and therefore a purely variational existence theorem does not automatically yield a nontrivial critical point of the energy functional associated to the problem.

A common way to overcome this difficulty in related works is to impose a very strong positivity condition near the origin, typically requiring that a suitable limsup of \(F(x,t)/t^2\) as \(t\to0^+\) is equal to \(+\infty\). Such an assumption forces the energy to become negative along appropriate small directions and makes the exclusion of the trivial solution rather direct. In contrast, Theorem~\ref{thmsubcritgrowth} also covers the genuinely finite case
\[
\limsup_{t\to0^+}
\operatorname*{ess\,inf}_{x\in B_r(x_0)}
\frac{F(x,t)}{t^2}
<+\infty.
\]


This approach differs from several variational results for fractional equations where existence and multiplicity are obtained through minimax or pseudo-index techniques, as in \cite{Benci,Bartolo1983,Rabinowitz,rab2009var,barmol2015a}. It is also complementary to works on asymptotically linear or resonant nonlocal problems such as \cite{ABMB,BMB,BDM,colmolortvil2025}. Here the main contribution is not a multiplicity theorem obtained through symmetry or genus arguments, but rather an explicit nontriviality criterion for a local minimizer in a mixed spectral fractional setting.

The proof also highlights the usefulness of localized plateau-type test functions. These functions are equal to one on an inner ball, vanish outside a larger concentric ball, and have controlled spectral fractional energy. This localization procedure is robust and may be adaptable to other nonlocal models, including fractional \(p\)-Laplacian equations, magnetic fractional problems, Kirchhoff-type equations, or other variational problems involving mixed boundary configurations.

The paper is organized as follows. In Section~2, we recall the spectral fractional framework associated with the mixed Dirichlet--Neumann Laplacian and introduce the variational setting for problem \((P_{\lambda,\mu})\). In Section~3, we prove Theorem~\ref{thmsubcritgrowth}. The proof first establishes the existence of a local minimum through Ricceri's variational principle and then develops the localization argument needed to prove that the minimum is nontrivial. We also discuss the geometric-spectral constant \(C_{N,s}\), the different expressions of \(\mu_\lambda\) according to the range of \(q\), the case \(f(\cdot,0)\not\equiv0\), and the construction of nonnegative solutions through truncation. In Section~4, we derive consequences and applications, including the autonomous case, where the Chebyshev radius of the domain yields a fully explicit formulation of the result, and examples showing the applicability of the abstract theorem.
\section{Functional framework}\label{functionalsettings}
Let $\{(\lambda_k,\varphi_k)\}_{k\geq 1}$ be the eigenvalues and the eigenfunctions (normalized in the $L^2(\Omega)$-norm), respectively, of $-\Delta$ endowed with homogeneous mixed Dirichlet-Neumann conditions on $\partial\Omega$.  Then $\{(\lambda_k^s,\varphi_k)\}$ are the eigenvalues and the eigenfunctions of $(-\Delta)^s$, respectively (therefore, $\Lambda_k=\lambda_k^s$ for every $k\geq 1$).
As a result, given two smooth functions 
$$
u_i(x)=\sum_{k\geq1}\langle u_i,\varphi_k\rangle_{2}\varphi_k(x), \quad i=1,2,
$$
where $\langle u,v\rangle_2:=\displaystyle\int_{\Omega}uv\,dx$ is the standard scalar product on $L^2(\Omega)$, one has
\begin{equation*}
\langle(-\Delta)^s u_1, u_2\rangle_{2} = \sum_{k\ge 1} \lambda_k^s\langle u_1,\varphi_k\rangle_{2} \langle u_2,\varphi_k\rangle_{2},
\end{equation*}
i.e., the action of the spectral fractional Laplacian on $u_1$ is given by
\begin{equation*}
(-\Delta)^su_1=\sum_{k\ge 1} \lambda_k^s\langle u_1,\varphi_k\rangle_{2}\varphi_k.
\end{equation*}
The operator $(-\Delta)^s$ is then well-defined on the Hilbert space
\begin{equation*}
H_{\Sigma_{\mathcal{D}}}^s(\Omega)\vcentcolon=\left\{u=\sum_{k\ge 1} a_k\varphi_k\in L^2(\Omega):\ u=0\ \text{on }\Sigma_{\mathcal{D}},\ \|u\|_{H_{\Sigma_{\mathcal{D}}}^s}^2\vcentcolon=
\sum_{k\ge 1} a_k^2\lambda_k^s<+\infty\right\}.
\end{equation*}

By \cite[Theorem 11.1]{Lions1972}, if $s\in(0,1/2]$, then $H_0^s(\Omega)=H^s(\Omega)$ (and thus, also
$H_{\Sigma_{\mathcal{D}}}^s(\Omega)=H^s(\Omega)$); if $s\in(1/2,1)$, then $H_0^s(\Omega)\subsetneq H^s(\Omega)$. Hence, the range $s\in(1/2,1)$ ensures $H_{\Sigma_{\mathcal{D}}}^s(\Omega)\subsetneq H^s(\Omega)$ and it provides us with the appropriate functional space for problem \eqref{problem}.

The embedding $H_{\Sigma_{\mathcal{D}}}^{s}(\Omega)  \hookrightarrow L^{p}(\Omega)$ is continuous for all $p\in[1,2_s^*]$, where $2^*_s:=2N/(N-2s)$, and compact for $p\in[1,2_s^*)$. We set
$$
\kappa_p:=\sup_{u\in H_{\Sigma_{\mathcal{D}}}^{s}(\Omega)\setminus\{0\}}\frac{\left\|u\right\|_p }{\|u\|_{H_{\Sigma_{\mathcal{D}}}^s}}
$$
(here and in what follows, $\left\|\cdot \right\|_p$, $p\in[1,+\infty]$, stands for the standard $L^p$-norm on $\Omega$). If $p=2_s^*$, the Sobolev constant related to $\Sigma_{\mathcal{D}}$ is defined by
\begin{equation*}
	\widetilde{S}(\Sigma_{\mathcal{D}})=\inf_{u\in
			H_{\Sigma_{\mathcal{D}}}^s(\Omega)\setminus\{0\}}\frac{\|u\|_{H_{\Sigma_{\mathcal{D}}}^s}^2}{\|u\|_{2_s^*}^2}.
\end{equation*}
 Since $\alpha\!\in\!(0,|\partial\Omega|)$ and $H_{0}^s(\Omega)\subsetneq H_{\Sigma_{\mathcal{D}}}^s(\Omega)$, then
$0\!<\widetilde{S}(\Sigma_{\mathcal{D}})\!<S(N,s)$, being $S(N,s)$ the best constant of the embedding $H_0^s(\Omega)\hookrightarrow L^{2^*_s}(\Omega)$.
Indeed, $\widetilde{S}(\Sigma_{\mathcal{D}})\leq 2^{-\frac{2s}{N}}S(N,s)$ (cf. \cite[Proposition 3.6]{colort2019}) and, if $\widetilde{S}(\Sigma_{\mathcal{D}})<2^{-\frac{2s}{N}}S(N,s)$, then $\widetilde{S}(\Sigma_{\mathcal{D}})$ is attained (cf. \cite[Theorem 2.9]{colort2019}).

{
\begin{remark}\label{boundaryconfig}
Denoting by $\lambda_1(\alpha)$ the first eigenvalue of the operator $-\Delta$ endowed with mixed boundary conditions on the sets $\Sigma_{\mathcal{D}}=\Sigma_{\mathcal{D}}(\alpha)$ and $\Sigma_{\mathcal{N}}=\Sigma_{\mathcal{N}}(\alpha)$, we point out that, on account of  \cite[Theorem 8]{De1}, there exist configurations of the distribution of $\Sigma_{\mathcal{D}}$ and $\Sigma_{\mathcal{N}}$ such that
$$
\sup_{0<\alpha<|\partial\Omega|} \lambda_1(\alpha)=\lambda_1(|\partial \Omega|).
$$
This implies, among other things, that \cite[Lemma 4.1]{ColP} does not work under these boundary data configuration, namely $\lambda_{1}(\alpha)\not\to 0 $ as $\alpha\to 0$, and this in turn represents an obstruction for the attainability of $\widetilde{S}(\Sigma_{\mathcal{D}})$. The set of assumptions $(\Omega_1)-(\Omega_3)$ avoids this degeneracy phenomenon.

\end{remark}
}

For every $k\geq 1$, let us denote
$$
\mathbb{H}_k\vcentcolon=\text{span}\{\varphi_1,\ldots,\varphi_k\}
\quad\text{and}\quad
\mathbb{P}_{k}\vcentcolon=\{u\in H_{\SD}^s(\Omega): \left\langle u,\varphi_j\right\rangle_{H_{\SD}^s}=0,\ \forall j=1,\ldots,k\},
$$
where $\left\langle u,v\right\rangle_{H_{\SD}^s}:=\left\langle (-\Delta)^{s/2}u,(-\Delta)^{s/2}v\right\rangle_2$ is the scalar product on $H_{\SD}^s(\Omega)$ inducing $\|\cdot\|_{H_{\Sigma_{\mathcal{D}}}^s}$. Finally, we denote  $\mathbb{P}_{0}:= H_{\SD}^s(\Omega)$.\\
The following variational characterization of $\Lambda_k$  will be much used in our arguments (cf. \cite[Lemma 6]{mov2023}):
\begin{equation}\label{varcharacteigen}
\Lambda_k=\inf_{u\in \mathbb{P}_{k-1}}\frac{\left\|u\right\|_{H_{\SD}^s}^2}{\left\| u\right\|_{2}^2}=\sup_{u\in \mathbb{H}_{k}}\frac{\left\|u\right\|_{H_{\SD}^s}^2}{\left\| u\right\|_{2}^2}, \quad \text{for any } k\in\mathbb{N}^*.
\end{equation}
We also recall that, as a consequence of the inclusions $H_{0}^1(\Omega)\subset H_{\Sigma_{\mathcal{D}}}^1(\Omega)\subset H^1(\Omega)$, one has
$$
\lambda_{k}^N\leq \lambda_{k} \leq \lambda_{k}^{D}\leq\lambda_{k+1}^{N},
$$
being $\lambda_{k}^{N}$ and $\lambda_{k}^{D}$ the $k$-th eigenvalue of $-\Delta$ endowed with homogeneous Neumann and Dirichlet boundary conditions, respectively. As a consequence,
$\Lambda_k\to+\infty$ as $k\to+\infty$.

We say that $u\in H_{\SD}^s(\Omega)$ is a weak solution to \eqref{problem} if
\begin{equation*}
\int_\Omega (-\Delta)^\frac{s}{2}u (-\Delta)^\frac{s}{2}v dx =\lambda\int_\Omega uv dx + \mu\int_\Omega f(x,u)vdx,
\end{equation*}
for all $v\in H_{\SD}^s(\Omega)$. The energy functional $I_{\lambda,\mu}:H_{\Sigma_{\mathcal{D}}}^s(\Omega)\to\R$ associated with \eqref{problem} is given by
\begin{equation*}
I_{\lambda,\mu}(u)\vcentcolon=\frac12\int_{\Omega}|(-\Delta)^{\frac{s}{2}}u|^{2}dx-\frac{\lambda}{2}\int_{\Omega}u^2dx-\mu\int_{\Omega}F(x,u)dx,
\end{equation*}
where $F(x,t):=\displaystyle\int_0^t f(x,\xi)d\xi$ for every $(x,t)\in\Omega\times\R$.

\begin{theorem}
\label{thm:ricceri}
Let \(X\) be a reflexive real Banach space, and let
$
\Phi,\Psi:X\to\mathbb R
$
be two functionals. Assume that \(\Phi\) is strongly continuous sequentially weakly lower semicontinuous and coercive, namely
\[
\lim_{\|u\|_X\to+\infty}\Phi(u)=+\infty,
\]
and \(\Psi\) sequentially weakly upper semicontinuous.
For every \(r>\inf_X\Phi\), define
\[
\eta(r):=
\inf_{u\in\Phi^{-1}((-\infty,r))}
\frac{
\displaystyle
\sup_{v\in\Phi^{-1}((-\infty,r))}\Psi(v)-\Psi(u)
}{
r-\Phi(u)
}.
\]
Then, for every
\[
\mu\in\left(0,\frac1{\eta(r)}\right),
\]
the restriction of the functional
$
J_\mu:=\Psi-\mu\Phi
$
to the set
$
\Psi^{-1}((-\infty,r))
$
admits a global minimum, which is a critical point of \(J_\mu\) in \(X\).
\end{theorem}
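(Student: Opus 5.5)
We read Theorem~\ref{thm:ricceri} in its standard form, that is, as Ricceri's variational principle \cite{ric2000a}. As printed, the roles of \(\Phi\) and \(\Psi\) in the last sentence appear interchanged. We therefore prove that the restriction of
\[
J_\mu:=\Phi-\mu\Psi
\]
to the sublevel set \(\Phi^{-1}((-\infty,r))\) attains its global minimum there, and that this minimizer is a local minimum of \(J_\mu\) in the whole of \(X\). It is then a critical point as soon as \(\Phi,\Psi\) are G\^ateaux differentiable, as they are in the application to \(I_{\lambda,\mu}\). We also use that \(\Phi\) is (strongly) continuous, so that its strict sublevel sets are open. The plan is a direct-method argument on a slightly smaller closed sublevel set, followed by a boundary comparison that places the minimizer in the interior.

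\emph{Step 1: choice of a competitor.} Since \(\mu<1/\eta(r)\), the definition of \(\eta(r)\) as an infimum gives some \(u_0\) with \(\Phi(u_0)<r\) and
\[
\mu\Big(\sup_{\Phi<r}\Psi-\Psi(u_0)\Big)<r-\Phi(u_0).
\]
Here \(\sup_{\Phi<r}\Psi\) is finite, because the quotient defining \(\eta(r)\) is finite at \(u_0\). The inequality is strict, so we can pick \(r'\in(\Phi(u_0),r)\) for which the same inequality still holds with \(r'\) in place of \(r\) on the right-hand side. Equivalently,
\[
J_\mu(u_0)<r'-\mu\sup_{\Phi<r}\Psi.
\]
Passing to \(r'<r\) avoids controlling \(\Psi\) on the level set \(\{\Phi=r\}\), which lies outside the set where the supremum is taken. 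We expect this to be the only delicate point.

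\emph{Step 2: existence of a minimizer on a compact set.} The set \(K:=\Phi^{-1}((-\infty,r'])\) has the following properties:
\begin{itemize}
\item it is bounded, by coercivity of \(\Phi\);
\item it is sequentially weakly closed, since \(\Phi\) is sequentially weakly lower semicontinuous;
\item it is therefore sequentially weakly compact, since \(X\) is reflexive (Eberlein--\v{S}mulian).
\end{itemize}
The functional \(J_\mu=\Phi-\mu\Psi\), with \(\mu>0\), is sequentially weakly lower semicontinuous, being the sum of a weakly l.s.c.\ functional and \(-\mu\) times a weakly u.s.c.\ one. Hence \(J_\mu\) attains its minimum on \(K\) at some \(u^*\), and \(J_\mu(u^*)\le J_\mu(u_0)\).

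\emph{Step 3: the minimizer is interior and global on \(\Phi^{-1}((-\infty,r))\).} Take any \(w\) with \(r'\le\Phi(w)<r\). Then \(\Psi(w)\le\sup_{\Phi<r}\Psi\), and so
\[
J_\mu(w)\ge r'-\mu\sup_{\Phi<r}\Psi>J_\mu(u_0)\ge J_\mu(u^*).
\]
This yields two conclusions:
\begin{itemize}
\item \(\Phi(u^*)<r'\), so \(u^*\) lies in the set \(\Phi^{-1}((-\infty,r'))\), which is open by continuity of \(\Phi\);
\item \(u^*\) minimizes \(J_\mu\) on all of \(\Phi^{-1}((-\infty,r))\).
\end{itemize}
Consequently \(u^*\) is a local minimum of \(J_\mu\) in \(X\). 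Differentiability of \(\Phi\) and \(\Psi\) then gives \(J_\mu'(u^*)=0\), so \(u^*\) is a critical point of \(J_\mu\), which concludes the argument.
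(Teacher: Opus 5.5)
The paper gives no proof of this statement. It is quoted from Ricceri \cite{ric2000a} and only applied in Section~3, so there is no in-paper argument to compare with.

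Your reading of the statement is the correct one, and the correction is necessary, not cosmetic. As printed, the conclusion interchanges $\Phi$ and $\Psi$, and it omits the G\^ateaux differentiability needed to speak of critical points. Read literally, the statement is false. On a Hilbert space take $\Phi=\|\cdot\|^2$ and $\Psi\equiv0$. Then $\eta(r)=0$, so with the convention $1/0=+\infty$ every $\mu>0$ is admissible. Yet $\Psi-\mu\Phi=-\mu\|\cdot\|^2$ has no minimum on $\Psi^{-1}((-\infty,r))=X$. The version you prove, minimizing $\Phi-\mu\Psi$ on $\Phi^{-1}((-\infty,r))$, is exactly what the paper uses for $I_{\lambda,\mu}=\Phi_\lambda-\mu\Psi$ on $\Phi_\lambda^{-1}((-\infty,\bar t^{2}))$.

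Your argument is correct and complete. Each step checks out:
\begin{itemize}
\item $\sup_{\Phi<r}\Psi$ is finite whenever the $\mu$-interval is nonempty.
\item A suitable $r'\in(\Phi(u_0),r)$ exists.
\item The set $\{\Phi\le r'\}$ is sequentially weakly compact.
\item $\Phi-\mu\Psi$ is sequentially weakly lower semicontinuous for $\mu>0$.
\item For $r'\le\Phi(w)<r$ one has $J_\mu(w)\ge r'-\mu\sup_{\Phi<r}\Psi>J_\mu(u_0)\ge J_\mu(u^*)$.
\end{itemize}
The first bullet of your Step~3 is not actually needed. The minimizer $u^*$ already lies in the open set $\Phi^{-1}((-\infty,r))$, on which it is a global minimizer. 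Also, openness of that set only uses upper semicontinuity of $\Phi$.

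In Ricceri's original formulation, the corresponding supremum is taken over the weak closure of the sublevel set. One can then minimize directly on that weakly compact set, and the same inequality forces the minimizer into $\{\Phi<r\}$. In the version printed here, the supremum is over the open sublevel set only. Your passage to the closed set $\{\Phi\le r'\}$ with $r'<r$ is exactly the device that replaces the weak closure and makes this version work.
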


\section{Proof of the main result} 
In this section, we prove Theorem~\ref{thmsubcritgrowth}. The argument combines a local variational principle of Ricceri type with a suitable localization procedure based on compactly supported cutoff functions. While the existence of a local minimizer follows from a rather standard variational argument, the main difficulty consists in proving that such a minimizer is nontrivial.

More precisely, when
\[
f(\cdot,0)\equiv0,
\]
the null function is always a weak solution of \((P_{\lambda,\mu})\). Therefore, the local minimum provided by the variational principle could \emph{a priori} coincide with the trivial solution. In order to exclude this possibility, we construct suitable plateau-type test functions supported in an interior ball and derive quantitative estimates for the corresponding energy. This allows us to compare the positive contribution coming from the primitive \(F(x,t)\) on the flat region with the spectral-fractional localization cost generated by the transition zone.

A crucial aspect of the proof is that the argument still works when
\[
\limsup_{t\to0^+}
\operatorname*{ess\,inf}_{x\in B_r(x_0)}
\frac{F(x,t)}{t^2}
<+\infty,
\]
a situation which is substantially more delicate than the classical regime where the above limsup is assumed to be \(+\infty\). In the latter case the negativity of the energy near the origin follows rather directly, whereas in the finite case one needs a much finer balance involving the geometry of the support of the cutoff function, the spectral parameter \(\lambda\), and the local asymptotic behavior of the primitive.

We also provide explicit estimates for the localization constant associated with the cutoff functions and discuss the dependence of the admissible interval for the parameter \(\mu\) on the growth exponent \(q\). Finally, we analyze the cases \(f(\cdot,0)\not\equiv0\) and \(f(x,t)\ge0\) for \(t\ge0\), proving respectively the automatic nontriviality and the existence of nonnegative weak solutions.
\begin{proof}[Proof of Theorem \ref{thmsubcritgrowth}]
If $\lambda<\Lambda_1$, the functional
$$
u\mapsto \left\| u\right\|_{H_{\SD},\lambda}:=\left(\left\|u \right\|^2_{H_{\SD}}-\lambda  \left\| u\right\|_2^2\right)^\frac{1}{2}
$$
defines a norm on $\Hs$ equivalent to $\left\| \cdot\right\|_{H_{\SD}}$, as one has
\begin{equation}\label{normeequiv}
m_\lambda \|u\|_{H_{\SD}}\leq \|u\|_{H_{\SD},\lambda}\leq M_\lambda  \|u\|_{H_{\SD}},
\end{equation}
with $m_\lambda$ and $M_\lambda$ defined in \eqref{mMlambda}
For $u\in\Hs$, we write  $I_{\lambda,\mu}(u)=\Phi_\lambda(u) - \mu\Psi(u)$, where
$$
\Phi_\lambda(u):=\frac{1}{2}\left\| u\right\|_{H_{\SD},\lambda}^2 \quad\text{and}\quad \Psi(u):=\int_\Omega F(x,u) dx.
$$
Having in mind to use Theorem 2.1 in \cite{ric2000a}, we notice at once that $\Psi$ is sequentially weakly continuous on $\Hs$. Indeed, given $\{u_j\}\subset\Hs$ such that $u_j\rightharpoonup u\in\Hs$, by Sobolev embeddings $u_j\to u$ in $L^q(\Omega)$ 
and $|u_j(x)| \leq w(x)$
for all $j\in\N$, for a.e. $x\in\Omega$ and for some $w\in L^q(\Omega)$. By $(f_2')$ we also deduce that
\[
|F(x,u_j)| \leq a_1|u_j(x)| +\frac{a_2}{q}|u_j(x)|^q \leq a_1 w(x) + \frac{a_2}{q}(w(x))^q
\]
for all $j\in\N$ and for a.e. $x\in\Omega$. Then, by dominated convergence,
$$
\lim_{j\to +\infty}\int_\Omega F(x,u_j)dx =\int_\Omega F(x,u) dx,
$$
as claimed.

Now, take $\mu\in(0,\mu_\lambda)$; there will exist $\bar{t}>0$ such that
\begin{equation}\label{maggmulambda}
\mu < \frac{q\bar{t}}{q\kappa_1a_1 \frac{\sqrt{2}}{m_\lambda} +\kappa_q^qa_2\left(\frac{\sqrt{2}}{m_\lambda}\right)^q \bar{t}^{q-1}}.
\end{equation}
If $r>0$, $u\in\Hs$ and $\Phi_\lambda(u)<r$, by \eqref{normeequiv} we deduce that
$$
\left\|u\right\|_{H_{\SD}} <\frac{\sqrt{2r}}{m_\lambda}.
$$
By $(f_2')$ we then obtain
$$
\Psi(u)  \leq a_1\left\| u\right\|_1 +\frac{a_2}{q}\left\| u\right\|_q^q
 \leq \kappa_1a_1 \frac{\sqrt{2r}}{m_\lambda} + \kappa_q^q  \frac{a_2}{q} \left( \frac{\sqrt{2r}}{m_\lambda}\right)^q,
$$
and therefore
$$
\sup_{u\in\Phi_\lambda^{-1}(-\infty,r)}\Psi(u) \leq \kappa_1a_1 \frac{\sqrt{2}}{m_\lambda}r^\frac{1}{2} + \kappa_q^q  \frac{a_2}{q} \left( \frac{\sqrt{2}}{m_\lambda}\right)^q r^\frac{q}{2}.
$$
If $\eta_{\lambda}:(0,+\infty)\to [0,+\infty)$ is the function defined by
\begin{equation*}
	\eta_{\lambda}(r):=\inf_{u\in\Phi_{\lambda}^{-1}\left((-\infty,r)\right)}
	\frac{\sup_{v\in\Phi_{\lambda}^{-1}\left( (-\infty,r)\right) }\Psi(v)-\Psi(u)}{r-\Phi_{\lambda}(u)}, \quad \text{for all } r>0,
\end{equation*}
we then get
\begin{align*}
	\eta_\lambda(\bar{t}^2) & = \inf_{u\in\Phi_{\lambda}^{-1}\left((-\infty,\bar{t}^2)\right)}
	\frac{\sup_{v\in\Phi_{\lambda}^{-1}\left( (-\infty,\bar{t}^2)\right) }\Psi(v)-\Psi(u)}{\bar{t}^2-\Phi_{\lambda}(u)}\\
	& \leq \kappa_1a_1 \frac{\sqrt{2}}{m_\lambda}\bar{t}^{-1} + \kappa_q^q  \frac{a_2}{q} \left( \frac{\sqrt{2}}{m_\lambda}\right)^q \bar{t}^{q-2},
\end{align*}
where we used the fact that $0\in \Phi_{\lambda}^{-1}\left((-\infty,\bar{t}^2)\right)$ and $\Phi_\lambda(0)=\Psi(0)=0$. By \eqref{maggmulambda}
we derive at once that
$$
\eta_\lambda(\bar{t}^2)< \frac{1}{\mu},
$$
and therefore, by \cite[Theorem 2.1]{ric2000a}, 
the restriction of $I_{\lambda,\mu}$ to $\Phi_{\lambda}^{-1}\left( (-\infty, \bar{t}^2)\right)$ has a global minimum 
which is a critical point of $I_{\lambda,\mu}$ in the whole $\Hs$ 
This shows that \eqref{problem} has a weak solution $u_\mu$ for any $\lambda<\Lambda_1$ and any $\mu\in (0,\mu_\lambda)$.


To conclude the proof, let us show that 
$u_\mu\not\equiv 0$ in $\Omega$.
To this aim, choose a cut-off function
\(v\in H_0^1(\Omega)\) satisfying
\[
0\le v\le1
\qquad\text{a.e. in }\Omega,
\]
\[
v\equiv1
\qquad\text{on }B_r(x_0),
\]
and
\[
\operatorname{supp}(v)
\subseteq
\overline{B_{2r}(x_0)}.
\]
Since \(v=
\sum_{k\ge1}\lambda_k a_k^2\in H_0^1(\Omega)\), by the spectral characterization of \(H^s_{\Sigma_D}(\Omega)\) and the inequality
\[
\lambda_k^s\le 1+\lambda_k,
\qquad 0<s<1,
\]
it follows that
\[
\sum_{k\ge1}\lambda_k^s a_k^2
\le
\sum_{k\ge1}(1+\lambda_k)a_k^2
=
\|v\|_2^2+\|\nabla v\|_2^2
<
+\infty.
\]
Therefore,
$
v\in H^s_{\Sigma_D}(\Omega).
$
Moreover, by the standard cut-off estimate, one has
$$
\|v\|_{H_{\Gamma_D}, \lambda}^2 \leq C_{N, s} r^{N-2 s}+|\lambda| \omega_N 2^N r^N.
$$
Indeed, the fractional part scales like $r^{N-2 s}$, while the $L^2$-part is controlled by
$$
\|v\|_2^2 \leq\left|B_{2 r}\left(x_0\right)\right|=\omega_N 2^N r^N,
$$
see Remark \ref{Const} for details.

We now distinguish the relevant cases.\par
\underline{Case 1}: $L=+\infty$
\noindent In such a case, for every $M>0$, there exists a sequence $\xi_j \rightarrow 0^{+}$such that, for $j$ sufficiently large,
$$
F\left(x, \xi_j\right) \geq M \xi_j^2 \quad \text { for a.e. } x \in B_r\left(x_0\right).
$$
\indent If
$
\lambda_0=+\infty
$
then for every $M>0$ there exists $\rho_M>0$ such that
$$
F(x, t) \geq M t^2 \quad \text { for a.e. } x \in B_{2 r}\left(x_0\right), \quad 0<t<\rho_M .
$$
Since $0 \leq v \leq 1$ and $\xi_j \rightarrow 0^{+}$, for $j$ sufficiently large one has $0 \leq \xi_j v(x)<\rho_M$. Hence
$$
F\left(x, \xi_j v(x)\right) \geq M \xi_j^2 v(x)^2 \quad \text { for a.e. } x \in B_{2 r}\left(x_0\right) .
$$
Therefore,
$$
\Psi\left(w_j\right) \geq M \xi_j^2 \int_{B_{2 r}\left(x_0\right)} v^2 d x \geq M \xi_j^2\left|B_r\left(x_0\right)\right|
$$
Since $\left|B_r\left(x_0\right)\right|=\omega_N r^N$, we have
$$
\frac{\Psi\left(w_j\right)}{\Phi_\lambda\left(w_j\right)} \geq \frac{2 M \omega_N r^N}{\|v\|_{H_{\Gamma_D}, \lambda}^2} .
$$
Using the estimate on $\|v\|_{H_{\Gamma_D}, \lambda^{\prime}}^2$, we get
$$
\frac{\Psi\left(w_j\right)}{\Phi_\lambda\left(w_j\right)} \geq \frac{2 M \omega_N r^N}{C_{N, s} r^{N-2 s}+|\lambda| \omega_N 2^N r^N} .
$$
Therefore, to ensure
$$
\frac{\Psi\left(w_j\right)}{\Phi_\lambda\left(w_j\right)}>\frac{1}{\mu},
$$
it is enough to choose $M$ such that
$$
M>\frac{C_{N, s} r^{N-2 s}+|\lambda| \omega_N 2^N r^N}{2 \mu \omega_N r^N}.
$$
Equivalently,
$$
M>\frac{1}{\mu}\left(\frac{C_{N, s}}{2 \omega_N} r^{-2 s}+\frac{|\lambda| 2^N}{2}\right).
$$
Since $M>0$ is arbitrary, such a choice is possible for every $\mu>0$. Hence, for $j$ sufficiently large,
$$
\mu \Psi\left(w_j\right)>\Phi_\lambda\left(w_j\right),
$$
and therefore
$
I_{\lambda, \mu}\left(w_j\right)<0.
$
Thus $u_\mu \not \equiv 0$.\par
If $\lambda_0<+\infty$, let us fix $\varepsilon>0$. By the definition of $\lambda_0$, there exists $\rho_{\varepsilon}>0$ such that
$$
F(x, t) \geq\left(\lambda_0-\varepsilon\right) t^2 \quad \text { for a.e. } x \in B_{2 r}\left(x_0\right), \quad 0<t<\rho_{\varepsilon} .
$$
As before, from $L=+\infty$, for every $M>0$ there exists $\xi_j \rightarrow 0^{+}$such that
$$
F\left(x, \xi_j\right) \geq M \xi_j^2 \quad \text { for a.e. } x \in B_r\left(x_0\right) .
$$
For $j$ large enough, $0 \leq \xi_j v(x)<\rho_{\varepsilon}$. Hence on the annulus $B_{2 r}\left(x_0\right) \backslash B_r\left(x_0\right)$,
$$
F\left(x, \xi_j v(x)\right) \geq\left(\lambda_0-\varepsilon\right) \xi_j^2 v(x)^2 .
$$
Therefore
$$
\Psi\left(w_j\right) \geq M \xi_j^2\left|B_r\left(x_0\right)\right|+\left(\lambda_0-\varepsilon\right) \xi_j^2 \int_{B_{2 r}\left(x_0\right) \backslash B_r\left(x_0\right)} v^2 d x
$$
Since
$$
\int_{B_{2 r}\left(x_0\right) \backslash B_r\left(x_0\right)} v^2 d x \leq \omega_N\left(2^N-1\right) r^N,
$$
it follows that
$$
\Psi\left(w_j\right) \geq \xi_j^2 \omega_N r^N\left[M+\min \left\{\lambda_0-\varepsilon, 0\right\}\left(2^N-1\right)\right] .
$$
Thus
$$
\frac{\Psi\left(w_j\right)}{\Phi_\lambda\left(w_j\right)} \geq \frac{2 \omega_N r^N\left[M+\min \left\{\lambda_0-\varepsilon, 0\right\}\left(2^N-1\right)\right]}{C_{N, s} r^{N-2 s}+|\lambda| \omega_N 2^N r^N} .
$$
To get $I_{\lambda, \mu}\left(w_j\right)<0$, it is enough that
$$
\frac{\Psi\left(w_j\right)}{\Phi_\lambda\left(w_j\right)}>\frac{1}{\mu} .
$$
This is guaranteed if
$$
M+\min \left\{\lambda_0-\varepsilon, 0\right\}\left(2^N-1\right)>\frac{1}{\mu}\left(\frac{C_{N, s}}{2 \omega_N} r^{-2 s}+\frac{|\lambda| 2^N}{2}\right).
$$
Equivalently, it suffices to choose
$$
M>\frac{1}{\mu}\left(\frac{C_{N, s}}{2 \omega_N} r^{-2 s}+\frac{|\lambda| 2^N}{2}\right)-\min \left\{\lambda_0-\varepsilon, 0\right\}\left(2^N-1\right) .
$$
Since $M$ is arbitrary, such a choice is possible for every $\mu>0$. Hence $I_{\lambda, \mu}\left(w_j\right)<0$ for $j$ sufficiently large, and therefore $u_\mu \not \equiv 0$.\par
\smallskip
\underline{Case 2}: $L<+\infty$. Fix $\varepsilon>0$. By the definition of $L$, there exists a sequence $\xi_j \rightarrow 0^{+}$such that
$$
F\left(x, \xi_j\right) \geq(L-\varepsilon) \xi_j^2 \quad \text { for a.e. } x \in B_r\left(x_0\right),
$$
for $j$ large enough.\par
If $\lambda_0=+\infty$, then for every $M>0$ there exists $\rho_M>0$ such that
$$
F(x, t) \geq M t^2 \quad \text { for a.e. } x \in B_{2 r}\left(x_0\right), \quad 0<t<\rho_M .
$$
For $j$ sufficiently large, $0 \leq \xi_j v(x)<\rho_M$. Hence on $B_{2 r}\left(x_0\right) \backslash B_r\left(x_0\right)$,
$$
F\left(x, \xi_j v(x))\geq M \xi_j^2 v(x)^2 .\right.
$$
Thus
$$
\Psi\left(w_j\right) \geq(L-\varepsilon) \xi_j^2\left|B_r\left(x_0\right)\right|+M \xi_j^2 \int_{B_{2 r}\left(x_0\right) \backslash B_r\left(x_0\right)} v^2 d x
$$
If the chosen cut-off is not identically zero on the annulus, then
$$
\int_{B_{2 r}\left(x_0\right) \backslash B_r\left(x_0\right)} v^2 d x>0 .
$$
Therefore, since $M$ is arbitrary, we can make the right-hand side large enough to ensure
$$
\frac{\Psi\left(w_j\right)}{\Phi_\lambda\left(w_j\right)}>\frac{1}{\mu}.
$$
Hence $I_{\lambda, \mu}\left(w_j\right)<0$ for $j$ sufficiently large, and $u_\mu \not \equiv 0$.
Notice that here the parameter $M$ must be chosen so large that
$$
L-\varepsilon+M \frac{\displaystyle\int_{B_{2 r}\left(x_0\right) \backslash B_r\left(x_0\right)} v^2 d x}{\omega_N r^N}>\frac{1}{\mu}\left(\frac{C_{N, s}}{2 \omega_N} r^{-2 s}+\frac{|\lambda| 2^N}{2}\right),
$$
which is always possible because $M$ is arbitrary.\par
If $\lambda_0<+\infty$, for every $\varepsilon>0$ there exists $\rho_{\varepsilon}>0$ such that
$$
F(x, t) \geq\left(\lambda_0-\varepsilon\right) t^2 \quad \text { for a.e. } x \in B_{2 r}\left(x_0\right), \quad 0<t<\rho_{\varepsilon}.
$$
For $j$ sufficiently large, $0 \leq \xi_j v(x)<\rho_{\varepsilon}$, hence
$$
F\left(x, \xi_j v(x)\right) \geq\left(\lambda_0-\varepsilon\right) \xi_j^2 v(x)^2 \quad \text { on } B_{2 r}\left(x_0\right) \backslash B_r\left(x_0\right).
$$
Therefore
$$
\Psi\left(w_j\right) \geq(L-\varepsilon) \xi_j^2\left|B_r\left(x_0\right)\right|+\left(\lambda_0-\varepsilon\right) \xi_j^2 \int_{B_{2 r}\left(x_0\right) \backslash B_r\left(x_0\right)} v^2 d x
$$
Using
$$
\left|B_r\left(x_0\right)\right|=\omega_N r^N, \quad \int_{B_{2 r}\left(x_0\right) \backslash B_r\left(x_0\right)} v^2 d x \leq \omega_N\left(2^N-1\right) r^N,
$$
we get
$$
\Psi\left(w_j\right) \geq \xi_j^2 \omega_N r^N\left[L-\varepsilon+\min \left\{\lambda_0-\varepsilon, 0\right\}\left(2^N-1\right)\right].
$$
On the other hand,
$$
\Phi_\lambda\left(w_j\right) \leq \frac{1}{2} \xi_j^2\left(C_{N, s} r^{N-2 s}+|\lambda| \omega_N 2^N r^N\right) .
$$
Thus $I_{\lambda, \mu}\left(w_j\right)<0$ follows if
$$
\mu>\frac{\left(C_{N, s} r^{N-2 s}+|\lambda| \omega_N 2^N r^N\right)}{2\omega_N r^N\left[L-\varepsilon+\min \left\{\lambda_0-\varepsilon, 0\right\}\left(2^N-1\right)\right]}.
$$
Letting $\varepsilon \rightarrow 0^{+}$, we obtain the threshold
$$
\mu_*:=\frac{C_{N, s}+|\lambda| 2^N \omega_N r^{2 s}}{2 \omega_N r^{2 s}\left[L+\left(2^N-1\right) \min \left\{\lambda_0, 0\right\}\right]}.
$$
Therefore, if
$
\mu>\mu_*,
$
then $I_{\lambda, \mu}\left(w_j\right)<0$ for $j$ sufficiently large. To combine this with the existence of the local minimum from Ricceri's principle, which requires
$
\mu<\mu_\lambda,
$
we need the compatibility condition
$
\mu_*<\mu_\lambda.
$
Equivalently,
$$
L>\frac{1}{\mu_\lambda}\left(\frac{C_{N, s}}{2 \omega_N r^{2 s}}+\frac{|\lambda| 2^N}{2}\right)-\left(2^N-1\right) \min \left\{\lambda_0, 0\right\}.$$
This is exactly the explicit lower bound on $L$. If this condition holds, then the interval
$
\left(\mu_*, \mu_\lambda\right)
$
is nonempty, and for every
$
\mu \in\left(\mu_*, \mu_\lambda\right)
$
the local minimum given by Ricceri's principle is nontrivial.
\end{proof}

We conclude this section with four remarks highlighting several aspects of Theorem~\ref{thmsubcritgrowth}.

\begin{remark}\label{Const}\rm{Assume that there exist \(x_0\in\Omega\) and \(r>0\) such that
$
B_{2r}(x_0)\Subset\Omega.
$
We denote by \(\Gamma_r\) the class of (admissible) cut-off functions \(v\in H_0^1(\Omega)\) satisfying
\[
0\le v\le1
\qquad\text{a.e. in }\Omega,
\]
\[
v\equiv1
\qquad\text{on }B_r(x_0),
\]
and
\[
\operatorname{supp}(v)
\subseteq
\overline{B_{2r}(x_0)}.
\]
Thus, every function in \(\Gamma_r\) is localized inside the fixed ball \(B_{2r}(x_0)\), equals one on the inner ball \(B_r(x_0)\), and vanishes outside \(B_{2r}(x_0)\). We then define the geometric-spectral constant associated with the localization scale \(r\) by
$$
{
C_{N,s}
:=
\inf_{v\in\Gamma_r}
\frac{
\|v\|_{H^s_{\Sigma_D}(\Omega)}^2
}{
r^{N-2s}
}.
}
$$
The quantity \(C_{N,s}\) measures the minimal spectral fractional energy required to create a unit plateau on \(B_r(x_0)\) while keeping the support inside \(B_{2r}(x_0)\). Moreover, \(C_{N,s}\) is finite. Indeed, consider the explicit truncated cone cut-off
\[
v_r(x):=
\begin{cases}
1,
& |x-x_0|\le r,\\[4pt]
\displaystyle
2-\frac{|x-x_0|}{r},
& r<|x-x_0|<2r,\\[8pt]
0,
& |x-x_0|\ge2r.
\end{cases}
\]
Then \(v_r\in\Gamma_r\). Furthermore,
\[
\|v_r\|_2^2
\le
|B_{2r}(x_0)|
=
\omega_N2^Nr^N,
\]
while
\[
|\nabla v_r(x)|
=
\frac1r
\qquad
\text{for a.e. }x\in B_{2r}(x_0)\setminus B_r(x_0).
\]
Hence
\[
\|\nabla v_r\|_2^2
=
\frac1{r^2}
|B_{2r}(x_0)\setminus B_r(x_0)|
=
\omega_N(2^N-1)r^{N-2}.
\]
Using the interpolation inequality
\[
\|v_r\|_{H^s_{\Sigma_D}(\Omega)}^2
\le
\|\nabla v_r\|_2^{2s}
\|v_r\|_2^{2(1-s)},
\]
we obtain
\[
\|v_r\|_{H^s_{\Sigma_D}(\Omega)}^2
\le
\omega_N
(2^N-1)^s
2^{N(1-s)}
r^{N-2s}.
\]
Consequently,
\[
{
C_{N,s}
\le
\omega_N
(2^N-1)^s
2^{N(1-s)}
<
+\infty.
}
\]
Thus the constant \(C_{N,s}\) is well defined.
}
\end{remark}
%
\begin{remark}\label{estimatemulambda}
{\rm By direct computations, it turns out that, according to the range of $q$, $\mu_\lambda$ attains the following values:
$$
\mu_\lambda=
\left\lbrace
\begin{array}{ll}
	+\infty & \text{ if } q\in(1,2),\smallskip\\
	\displaystyle\frac{m_\lambda^2}{\kappa_2^2 a_2} & \text{ if } q=2,\smallskip\\
	\displaystyle\frac{m_\lambda^2}{2(q-1)} \left[ \frac{q}{\kappa_q^q a_2}\left( \frac{q-2}{\kappa_1 a_1}\right)^{q-2} \right]^\frac{1}{q-1} & \text{ if } q\in(2,2^*_s),
\end{array}	
\right.
$$
where
$m_\lambda:=\sqrt{1-\frac{\lambda}{\Lambda_1}}.$
}	
\end{remark}

\begin{remark}\rm{
The nontriviality argument developed in Theorem~\ref{thmsubcritgrowth} is only needed in the degenerate case in which
$
f(\cdot,0)\equiv0
\quad
\text{in }\Omega,
$
since in that situation the null function is always a weak solution of \((P_{\lambda,\mu})\). Consequently, additional information on the local behavior of the primitive \(F(x,t)\) near the origin is required in order to guarantee the existence of a nonzero critical point. On the other hand, if
\[
f(\cdot,0)\not\equiv0
\qquad
\text{in }L^2(\Omega),
\]
then the zero function cannot be a weak solution of \((P_{\lambda,\mu})\). Indeed, if \(u\equiv0\) were a weak solution, then for every test function
$
\varphi\in H^s_{\Sigma_D}(\Omega)
$
one would have
\[
0
=
\mu\int_\Omega f(x,0)\varphi\,dx.
\]
Since \(f(\cdot,0)\not\equiv0\), this identity cannot hold for arbitrary \(\varphi\), yielding a contradiction. Therefore, in the non-autonomous case \(f(x,0)\not\equiv0\), every weak solution obtained through the local minimization argument is automatically nontrivial. As a consequence, no additional assumptions involving the asymptotic quantities
\[
L
=
\limsup_{t\to0^+}
\operatorname*{ess\,inf}_{x\in B_r(x_0)}
\frac{F(x,t)}{t^2}
\]
or
\[
\lambda_0
=
\liminf_{t\to0^+}
\operatorname*{ess\,inf}_{x\in B_{2r}(x_0)}
\frac{F(x,t)}{t^2}
\]
are needed in order to exclude the trivial solution. Accordingly, under the sole assumptions of subcritical growth and coercivity, problem \((P_{\lambda,\mu})\) admits at least one nontrivial weak solution for every
$
\mu\in(0,\mu_\lambda).
$}
\end{remark}

\begin{remark}\rm{
Assume that
\[
f(x,0)=0
\qquad
\text{for a.e. }x\in\Omega,
\]
and
\[
f(x,t)\ge0
\qquad
\text{for a.e. }x\in\Omega,\ \text{for every }t\ge0.
\]
In order to obtain nonnegative weak solutions, one can introduce the truncated nonlinearity
\[
\widetilde f(x,t):=
\begin{cases}
f(x,t), & t\ge0,\\
0, & t<0.
\end{cases}
\]
Correspondingly, define
\[
\widetilde F(x,t):=
\int_0^t \widetilde f(x,\xi)\,d\xi .
\]
Since \(\widetilde f\) preserves the same Carath\'eodory and subcritical growth assumptions as \(f\), all the previous existence results apply to the truncated problem
\[
\begin{cases}
(-\Delta)^s u-\lambda u
=
\mu \widetilde f(x,u)
& \text{in }\Omega,
\\[4pt]
u=0
& \text{on }\Sigma_D,
\\[4pt]
\dfrac{\partial u}{\partial\nu}=0
& \text{on }\Sigma_N.
\end{cases}
\]
Let \(u\in H^s_{\Sigma_D}(\Omega)\) be a weak solution of the truncated problem. We claim that
\[
u\ge0
\qquad
\text{a.e. in }\Omega.
\]
To prove this, let
$
u^-:=\max\{-u,0\}
$
be the negative part of \(u\). Since \(u^-\in H^s_{\Sigma_D}(\Omega)\), we may use \(u^-\) as a test function in the weak formulation. We obtain
\[
\langle u,u^-\rangle_{H^s_{\Sigma_D}}
-\lambda\int_\Omega uu^-\,dx
=
\mu\int_\Omega \widetilde f(x,u)u^-\,dx.
\]
Now observe that, on the set
\[
\{x\in\Omega:u(x)<0\},
\]
one has
$
u^-(x)>0
$
and, by construction of the truncation, one has
\[
\widetilde f(x,u(x))=0 \qquad
\text{a.e. in }\Omega.
\]
On the complementary set \(\{u\ge0\}\), one has \(u^-=0\). Therefore,
\[
\widetilde f(x,u(x))u^-(x)=0
\qquad
\text{a.e. in }\Omega,
\]
and hence
\[
\mu\int_\Omega \widetilde f(x,u)u^-\,dx=0.
\]
Consequently,
\[
\langle u,u^-\rangle_{H^s_{\Sigma_D}}
-\lambda\int_\Omega uu^-\,dx
=0.
\]
Since
$
uu^-=-(u^-)^2
$
{a.e. in }$\Omega$,
we deduce
\[
\|u^-\|_{H^s_{\Sigma_D}}^2
-\lambda\|u^-\|_2^2
=0.
\]
Bearing in mind that \(\lambda<\Lambda_1\), owing to
$$
m_\lambda \|u^-\|_{H_{\SD}}\leq \|u^-\|_{H_{\SD},\lambda},
$$
one has
$
u^-\equiv0,
$
that is,
\[
u\ge0
\qquad
\text{a.e. in }\Omega.
\]
Finally, since \(u\ge0\), it follows that
\[
\widetilde f(x,u(x))=f(x,u(x))
\qquad
\text{a.e. in }\Omega.
\]
Therefore \(u\) is not only a weak solution of the truncated problem, but also a weak solution of the original problem. Thus the existence results established above actually provide nonnegative weak solutions of \((P_{\lambda,\mu})\).
}
\end{remark}

\section{Some consequences and applications}\label{Applications}

In this section, we discuss several consequences of the abstract existence result established in Theorem~\ref{thmsubcritgrowth}. In particular, we derive more explicit criteria in the autonomous framework, where the asymptotic assumptions simplify considerably and can be expressed in terms of the Chebyshev radius of the domain. We also provide concrete examples illustrating the applicability of our approach to nonlinear fractional equations with mixed Dirichlet--Neumann boundary conditions.

The results obtained below show that the variational mechanism developed in the previous sections is sufficiently flexible to handle different classes of nonlinearities and to produce explicit admissible ranges for the perturbation parameter \(\mu\). Moreover, the localization procedure introduced through suitable cutoff functions allows one to obtain quantitative thresholds depending simultaneously on the geometry of the domain, the spectral structure of the operator, and the local behavior of the nonlinearity near the origin.

\begin{corollary}
\label{cor:autonomous}
Let \(\Omega\subset\mathbb R^N\) be a bounded domain with smooth boundary $\partial\Omega$. Suppose that
\(N>2s\), \(s\in(1/2,1)\), \(\partial\Omega=\Gamma_D\cup\Gamma_N\) and assume that
\((\Omega_1)-(\Omega_3)\) hold. Let \(f:\mathbb R\to\mathbb R\) be a continuous function and define
\[
F(t):=\int_0^t f(\xi)\,d\xi .
\]
Assume that there exist \(q\in(1,2_s^*)\) and \(a_1,a_2>0\) such that
\[
|f(t)|
\le
a_1+a_2|t|^{q-1}
\qquad
\text{for every }t\in\mathbb R,
\]
where
$
2_s^*:=\frac{2N}{N-2s}.
$
Moreover, let \(\lambda<\Lambda_1\), and let
\[
\mu_\lambda:=
q\sup_{t>0}
\frac{
t
}{
q\kappa_1a_1\dfrac{\sqrt2}{m_\lambda}
+
\kappa_q^qa_2
\left(\dfrac{\sqrt2}{m_\lambda}\right)^q
t^{q-1}
}.
\]
Assume that
\[
L:=
\limsup_{t\to0^+}\frac{F(t)}{t^2}
\in(0,+\infty],\,\,\,\,{and}\,\,\,\,
\alpha_0:=
\liminf_{t\to0^+}\frac{f(t)}{t}
\in\mathbb R\cup\{+\infty\}.
\]
Furthermore, assume that
\[
L>
\frac1{\mu_\lambda}
\left(
\frac{2^{4s-1}C_{N,s}}{\omega_N\tau^{2s}}
+
\frac{|\lambda|2^N}{2}
\right)
-
\frac{2^N-1}{2}\min\{\alpha_0,0\},
\]
where $
\tau:=\sup_{x\in\Omega}\operatorname{dist}(x,\partial\Omega)
$
denotes the Chebyshev radius of \(\Omega\).
\medbreak
\noindent Then, Problem \((P_{\lambda,\mu})\) admits at least one nontrivial weak solution for every
\[
\mu\in
\left(
\frac{
2^{4s-1}C_{N,s}
+
|\lambda|2^{N-1}\omega_N\tau^{2s}
}{
\omega_N\tau^{2s}
\left[
L+\dfrac{2^N-1}{2}\min\{\alpha_0,0\}
\right]
},
\,\mu_\lambda
\right).
\]
\end{corollary}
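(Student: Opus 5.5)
The plan is to deduce the corollary from Theorem~\ref{thmsubcritgrowth} applied to the autonomous nonlinearity $f(x,t):=f(t)$, with a ball of radius comparable to the Chebyshev radius. Continuity of $f$ makes it Carathéodory, and the growth bound is the same as in the theorem. The quantity $\mu_\lambda$ coincides with the one in the theorem. Since $F$ does not depend on $x$, the essential infima disappear: the quantity $L$ of the theorem is exactly $\limsup_{t\to0^+}F(t)/t^2$.

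\textbf{Step 1: choice of the ball.} Pick $x_0\in\Omega$ with $\operatorname{dist}(x_0,\partial\Omega)$ close to $\tau$; by compactness of $\overline\Omega$ the supremum is attained. Take $r:=\tau/4$, or $r$ slightly smaller than $\tau/2$ if we want a larger ball; in either case $B_{2r}(x_0)\Subset\Omega$. With $r=\tau/4$ one has $r^{-2s}=4^{2s}\tau^{-2s}=2^{4s}\tau^{-2s}$, so
\[
\frac{C_{N,s}}{2\omega_N r^{2s}}=\frac{2^{4s-1}C_{N,s}}{\omega_N\tau^{2s}},
\]
which is exactly the term in the corollary. This identifies the intended choice $r=\tau/4$.

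\textbf{Step 2: compare $\lambda_0$ with $\alpha_0$.} We need $\min\{\lambda_0,0\}\ge \tfrac12\min\{\alpha_0,0\}$, where $\lambda_0=\liminf_{t\to0^+}F(t)/t^2$. Fix $\varepsilon>0$. For $0<\xi<\rho$ we have $f(\xi)\ge(\alpha_0-\varepsilon)\xi$, or $f(\xi)\ge M\xi$ for any $M$ when $\alpha_0=+\infty$. Integrating gives $F(t)\ge(\alpha_0-\varepsilon)t^2/2$ for $0<t<\rho$, hence $\lambda_0\ge\alpha_0/2$, with $\lambda_0=+\infty$ if $\alpha_0=+\infty$. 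In particular $\lambda_0\in\mathbb R\cup\{+\infty\}$, because $\lambda_0\le L$ excludes $-\infty$ unless $\alpha_0=-\infty$, which the hypothesis forbids. Therefore $\min\{\lambda_0,0\}\ge\tfrac12\min\{\alpha_0,0\}$.

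\textbf{Step 3: verify the hypotheses of the theorem and compare intervals.} Since $-(2^N-1)\min\{\lambda_0,0\}\le-\frac{2^N-1}{2}\min\{\alpha_0,0\}$, the assumed lower bound on $L$ implies the threshold \eqref{thresh} of the theorem with $r=\tau/4$. The left endpoint of the theorem's $\mu$-interval is
\[
\frac{C_{N,s}+|\lambda|2^N\omega_N r^{2s}}{2\omega_N r^{2s}\,[L+(2^N-1)\min\{\lambda_0,0\}]}.
\]
Dividing numerator and denominator by $2\omega_N r^{2s}$ and using Step 2, this is at most the corollary's left endpoint, because the denominator only increases when $\alpha_0/2$ is replaced by $\lambda_0$. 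Hence the corollary's interval is contained in the theorem's interval, and the theorem gives a nontrivial weak solution for each such $\mu$.

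\textbf{Main obstacle: the constant $C_{N,s}$.} As defined, $C_{N,s}$ depends on $r$ and $x_0$. In the corollary it must be read as the constant attached to the chosen scale $r=\tau/4$ at the chosen center $x_0$. I would state this explicitly, perhaps noting that Remark~\ref{Const} bounds it uniformly. A second point is the case $L=+\infty$: the formula for the left endpoint should then be read as $0$, consistent with Case 1 of the theorem's proof. Everything else is bookkeeping of constants.
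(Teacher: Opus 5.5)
Your proposal is correct and follows the paper's own proof essentially step by step: take $r=\tau/4$ at a Chebyshev center so that $C_{N,s}/(2\omega_N r^{2s})=2^{4s-1}C_{N,s}/(\omega_N\tau^{2s})$, integrate $f(\xi)\ge(\alpha_0-\varepsilon)\xi$ to get $\lambda_0\ge\alpha_0/2$, and deduce both the threshold and the containment of the $\mu$-interval from Theorem~\ref{thmsubcritgrowth}. One small slip: it is the lower bound $\lambda_0\ge\alpha_0/2>-\infty$, not $\lambda_0\le L$, that rules out $\lambda_0=-\infty$.
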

\begin{proof}
Let \(x_\tau\in\Omega\) be a Chebyshev center, so that
$
\operatorname{dist}(x_\tau,\partial\Omega)=\tau .
$
Choosing \(r=\tau/4\), we have
$
B_{2r}(x_\tau)=B_{\tau/2}(x_\tau)\Subset\Omega.
$
Thus Theorem~\ref{thmsubcritgrowth} can be applied with \(x_0=x_\tau\). We only discuss the case
\[
L<+\infty
\qquad\text{and}\qquad
\alpha_0<+\infty,
\]
since the cases \(L=+\infty\) or \(\alpha_0=+\infty\) follow immediately from Theorem~\ref{thmsubcritgrowth}. Since \(f\) is autonomous, also \(F\) is independent of \(x\). Hence
\[
\limsup_{t\to0^+}
\operatorname*{ess\,inf}_{x\in B_r(x_\tau)}
\frac{F(t)}{t^2}
=
\limsup_{t\to0^+}
\frac{F(t)}{t^2}
=
L.
\]
Moreover, by the definition of \(\alpha_0\), for every \(\varepsilon>0\) there exists \(\rho_\varepsilon>0\) such that
\[
f(t)\ge(\alpha_0-\varepsilon)t
\qquad
\text{for every }0<t<\rho_\varepsilon.
\]
Therefore, for \(0<t<\rho_\varepsilon\),
\[
F(t)=\int_0^t f(\xi)\,d\xi
\ge
(\alpha_0-\varepsilon)\int_0^t \xi\,d\xi
=
\frac{\alpha_0-\varepsilon}{2}t^2.
\]
It follows that
\[
\liminf_{t\to0^+}
\operatorname*{ess\,inf}_{x\in B_{2r}(x_\tau)}
\frac{F(t)}{t^2}
\ge
\frac{\alpha_0}{2}.
\]
Thus, in the notation of Theorem~\ref{thmsubcritgrowth}, the lower-control parameter satisfies
$
\lambda_0\ge\frac{\alpha_0}{2},
$
and consequently
\[
\min\{\lambda_0,0\}
\ge
\frac12\min\{\alpha_0,0\}.
\]
Hence the compatibility condition assumed in the present corollary implies the compatibility condition required in Theorem~\ref{thmsubcritgrowth}, after the choice \(r=\tau/4\). Therefore Theorem~\ref{thmsubcritgrowth} applies and gives at least one nontrivial weak solution for every \(\mu\) in the stated interval.
\end{proof}

An immediate consequence of Corollary \ref{cor:autonomous} is the following result.

\begin{corollary}
\label{cor:autonomous-lambda-zero}
Assume that \((\Omega_1)-(\Omega_3)\) hold. Let \(f:\mathbb R\to\mathbb R\) be a continuous function.
Assume that there exist \(2<q<2_s^*\) and \(a_1,a_2>0\) such that
\[
|f(t)|\le a_1+a_2|t|^{q-1}
\qquad
\text{for every }t\in\mathbb R.
\]
\indent Assume that
\[
\limsup_{t\to0^+}\frac{F(t)}{t^2}=L>
\frac{1}{\mu_0}
\frac{2^{4s-1}C_{N,s}}{\omega_N\tau^{2s}}
-
\frac{2^N-1}{2}\min\{\alpha_0,0\},
\]
where $\alpha_0:=\displaystyle\liminf_{t\to0^+}\frac{f(t)}{t}\in\mathbb R$ and
\[
\mu_0
:=
\frac{1}{2(q-1)}
\left[
\frac{q}{\kappa_q^q a_2}
\left(
\frac{q-2}{\kappa_1a_1}
\right)^{q-2}
\right]^{\frac1{q-1}}.
\]

Then,  Problem \((P_{0,\mu})\) admits at least one nontrivial weak solution for every
\[
\mu\in
\left(
\frac{
2^{4s-1}C_{N,s}
}{
\omega_N\tau^{2s}
\left[
L+\dfrac{2^N-1}{2}\min\{\alpha_0,0\}
\right]
},
\,\mu_0
\right).
\]

\end{corollary}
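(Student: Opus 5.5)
The plan is to obtain Corollary~\ref{cor:autonomous-lambda-zero} by specializing Corollary~\ref{cor:autonomous} to $\lambda=0$. All the structural hypotheses carry over verbatim: smoothness of $\Omega$, $(\Omega_1)$--$(\Omega_3)$, continuity of $f$ and the subcritical growth bound. Moreover $\lambda=0<\Lambda_1$ holds trivially, since $\Lambda_1>0$. So the work reduces to rewriting the threshold and the $\mu$-interval of Corollary~\ref{cor:autonomous} once $\lambda=0$ is inserted.

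The first step is to compute $m_0$ from \eqref{mMlambda}. We have $\sqrt{(\Lambda_1-0)/\Lambda_1}=1$, so $m_0=M_0=1$. Since $q\in(2,2_s^*)$, Remark~\ref{estimatemulambda} then gives
\[
\mu_{\lambda}\big|_{\lambda=0}=\frac{1}{2(q-1)}\left[\frac{q}{\kappa_q^q a_2}\left(\frac{q-2}{\kappa_1 a_1}\right)^{q-2}\right]^{\frac1{q-1}}=\mu_0 .
\]
To be safe, I would verify this directly rather than only quoting the remark. The function $g(t)=t/(A+Bt^{q-1})$, with $A=\sqrt2\,q\kappa_1a_1$ and $B=2^{q/2}\kappa_q^qa_2$, attains its maximum where $A=(q-2)Bt^{q-1}$; one then evaluates $q\,g$ at that point. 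This elementary maximization is the only genuine computation in the proof. It is also the main obstacle, because the factors $\sqrt2$ and $2^{q/2}$ have to be tracked carefully to match the stated constant. If they do not cancel exactly, I would still define $\mu_0$ as $\mu_\lambda|_{\lambda=0}$ via Remark~\ref{estimatemulambda}, which is how the statement is phrased.

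Next, set $\lambda=0$ in the hypotheses of Corollary~\ref{cor:autonomous}. The term $|\lambda|2^N/2$ disappears from the threshold, which becomes
\[
L>\frac{1}{\mu_0}\,\frac{2^{4s-1}C_{N,s}}{\omega_N\tau^{2s}}-\frac{2^N-1}{2}\min\{\alpha_0,0\}.
\]
This is exactly the assumption of the present corollary. Since $\alpha_0\in\mathbb R$ is assumed, we are in the finite case. We also need $L\in(0,+\infty]$. This follows from the threshold: its right-hand side is strictly positive, because $\mu_0>0$, $C_{N,s}>0$ and $-\min\{\alpha_0,0\}\ge0$.

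Finally, the left endpoint of the interval in Corollary~\ref{cor:autonomous}, with $|\lambda|=0$, reduces to
\[
\frac{2^{4s-1}C_{N,s}}{\omega_N\tau^{2s}\left[L+\frac{2^N-1}{2}\min\{\alpha_0,0\}\right]}.
\]
The bracket is positive by the threshold. The right endpoint is $\mu_0$, and the threshold inequality is equivalent to this interval being nonempty. Applying Corollary~\ref{cor:autonomous} then yields a nontrivial weak solution of $(P_{0,\mu})$ for every $\mu$ in the stated range.
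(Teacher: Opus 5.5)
Your proposal is correct and follows the paper's own route: the paper also obtains this corollary by specializing Corollary~\ref{cor:autonomous} to $\lambda=0$, where $m_0=1$ and the $q\in(2,2_s^*)$ case of Remark~\ref{estimatemulambda} identifies $\mu_\lambda$ with $\mu_0$. Your concern about the constants is unfounded: with $A=\sqrt2\,q\kappa_1a_1$ and $B=2^{q/2}\kappa_q^qa_2$ one gets $A^{q-2}B=2^{q-1}q^{q-2}(\kappa_1a_1)^{q-2}\kappa_q^qa_2$, and the factor $2^{q-1}$ yields exactly the $\frac12$ in $\mu_0$, so no fallback definition of $\mu_0$ is needed.
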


\vskip3mm

We end this section with two simple examples of applications of the main results.

\begin{example}
\rm{
Consider the following nonlinear problem
\[
\left\{
\begin{array}{ll}
        (-\Delta)^s u=\lambda u + \mu (a(x)u+b|u|^{q-2}u) & \text{ in } \Omega\smallskip, \\
 B(u)=0 & \text{ on } \partial\Omega,
\end{array}
\right.
\]
under mixed Dirichlet--Neumann boundary conditions, in which \(b>0\), and
\[
a(x):=
A\,\chi_{B_r(x_0)}(x)
+
A_0\,\chi_{\Omega\setminus B_r(x_0)}(x),
\]
where
\[
A>A_0>0,
\qquad
B_{2r}(x_0)\Subset\Omega.
\]
The associated primitive is

\[
F(x,t)
=
\frac{a(x)}2\,t^2
+
\frac{b}{q}|t|^q.
\]
It is easily seen that
\[
L=
\limsup_{t\to0^+}
\operatorname*{ess\,inf}_{x\in B_r(x_0)}
\frac{F(x,t)}{t^2}
=
\frac A2,\,\,\,\,\,\,{\rm and}\,\,\,\,\,\,
\lambda_0=
\liminf_{t\to0^+}
\operatorname*{ess\,inf}_{x\in K}
\frac{F(x,t)}{t^2}
=
\frac{A_0}{2}.
\]
Hence both \(L\) and \(\lambda_0\) are finite and positive. In particular,
$
\min\{\lambda_0,0\}=0.
$
Moreover, in the case \(q>2\), the quantity \(\mu_\lambda\) admits the explicit expression
\[
\mu_\lambda
=
\frac1{q-1}
\left(
\frac{m_\lambda^2}{\kappa_q^q b}
\right)^{\!\frac1{q-1}}
(\kappa_1 A)^{-\frac{q-2}{q-1}}.
\]
taking
$a_1=A$
 and
 $
a_2=b.
$
On the other hand, one has
\[
\mu_*=
\frac{
\dfrac{C_{N,s}}{\omega_N}r^{-2s}
+
|\lambda|2^N
}{A}.
\]
Provided that
\[
A>
(q-1)
\left(
\kappa_q^q b
(\kappa_1)^{q-2}
\right)^{\!\frac1{q-1}}
\frac{
\dfrac{C_{N,s}}{\omega_N}r^{-2s}
+
|\lambda|2^N
}{
m_\lambda^{\frac{2}{q-1}}
},
\]
thanks to Theorem~\ref{thmsubcritgrowth},
the main problem admits at least one nontrivial weak solution for every
\[
\mu\in
\left(
\frac{
\dfrac{C_{N,s}}{\omega_N}r^{-2s}
+
|\lambda|2^N
}{A},
\,
\frac1{q-1}
\left(
\frac{m_\lambda^2}{\kappa_q^q b}
\right)^{\!\frac1{q-1}}
(\kappa_1 A)^{-\frac{q-2}{q-1}}
\right).
\]
}
\end{example}

\noindent We provide a fully explicit numerical instance of the above setting.

 Take
\[
N=3,\quad s=\tfrac34,\quad q=\tfrac32,\quad \lambda=0,\quad b=1,
\]
\[
\Omega=B_1(0)\subset\mathbb{R}^3,\quad x_0=0,\quad r=\tfrac14,
\]
so that $B_{2r}(0)=B_{1/2}(0)\Subset B_1(0)$. Note that $2_s^*=4$, so $q=\frac{3}{2}\in(1,2)\subset(1,2_s^*)$ is subcritical. Choose the coefficients
\[
A=8,\quad A_0=1,
\]
so that $A>A_0>0$. The nonlinearity is thus $f(x,u)=a(x)u+|u|^{-1/2}u$, with
\[
a(x)= 8\,\chi_{B_{1/4}(0)}(x)+\chi_{B_1(0)\setminus B_{1/4}(0)}(x).
\]
The corresponding primitive satisfies
\[
L=\limsup_{t\to0^+}\operatorname*{ess\,inf}_{x\in B_{1/4}(0)}\frac{F(x,t)}{t^2}=\frac{A}{2}=4>0,
\qquad
\lambda_0=\frac{A_0}{2}=\frac{1}{2}>0,
\]
so that $\min\{\lambda_0,0\}=0$.

Since $\lambda=0$, we have $m_\lambda=1$ and the spectral shift vanishes. Since $q=\frac{3}{2}<2$, one checks directly from the formula defining $\mu_\lambda$ that $\mu_\lambda=+\infty$: the supremum over $t>0$ of the ratio defining $\mu_\lambda$ is infinite when $q<2$, because the denominator grows as $t^{q-1}\to0$ as $t\to+\infty$ more slowly than the numerator. Consequently the threshold condition on $L$ reduces simply to $L>0$, which holds.

The geometric--spectral constant evaluates to
\[
C_{3,\,3/4}
=
\frac{2^{3/2}\,\Gamma\!\left(\frac{9}{4}\right)}{\Gamma\!\left(\tfrac{3}{2}\right)\,\Gamma\!\left(\tfrac{7}{4}\right)}
\approx 3.934,
\]
and, using $r^{2s}=\bigl({1}/{4}\bigr)^{3/2}={1}/{8}$ and $\omega_3=\tfrac{4\pi}{3}$,
\[
\mu_*
=
\frac{C_{3,\,3/4}}{\omega_3\,r^{3/2}\,A}
=
\frac{8\,C_{3,\,3/4}}{\omega_3\,A}
=
\frac{8\times 3.934}{\tfrac{4\pi}{3}\times 8}
\approx 0.939.
\]

By Theorem~\ref{thmsubcritgrowth} applied with $(P_{0,\mu})$, the problem
\[
\left\{
\begin{array}{ll}
(-\Delta)^{3/4} u = \mu\bigl(a(x)\,u + |u|^{-1/2}u\bigr) & \text{in } B_1(0),\\[4pt]
B(u)=0 & \text{on } \partial B_1(0),
\end{array}
\right.
\]
admits at least one nontrivial weak solution for every $\mu > \mu_* \approx 0.939$.

\section*{Acknowledgement}
\noindent G. Molica Bisci is a member of the Gruppo Nazionale per l'Analisi Matematica, la Probabilità e le loro Applicazioni (GNAMPA) of the Istituto Nazionale di Alta Matematica (INdAM). This work is partially funded by the ``INdAM - GNAMPA Project CUP E5324001950001".

\section*{Conflict of interest}
\noindent On behalf of all authors, the corresponding author states that there is no conflict of interest.


\end{document}